\newtheorem{tm}{Theorem}[section]
\newtheorem{rk}{Remark}[section]
\newtheorem{ap}{Assumption}[section]
\newcommand{\ee}{\mathbb E}
\newcommand{\pp}{\mathbb P}
\newcommand{\nn}{\mathbb N}
\newcommand{\rr}{\mathbb R}
\newcommand{\hh}{\mathbb H}
\newcommand{\bs}{\mathbf s}
\newcommand{\LL}{\mathcal L}
\newcommand{\OO}{\mathcal O}
\newcommand{\RR}{\mathcal R}
\newcommand{\OOO}{\mathscr O}
\newcommand{\FFF}{\mathscr F}
\begin{document}

\title{Well-posedness and Finite Element Approximations for Elliptic SPDEs with Gaussian Noises}

\author{
   Yanzhao Cao \thanks{Department of Mathematics, Auburn University, Auburn, AL 36849, USA
      ({\it yzc0009@auburn.edu}).}
     \and Jialin Hong \thanks{Academy of Mathematics and Systems Science, Chinese Academy of Sciences, Beijing, China
     ({\it hjl@lsec.cc.ac.cn}).} 
     \and Zhihui Liu \thanks{ Department of Mathematics, 
The Hong Kong University of Science and Technology, 
Hong Kong ({\it zhliu@ust.hk}). }
  }
     \date{}
\maketitle

%
 
\date{}

%
%
%
%
%
%
%
%
%
%

%
 
\date{}
%
%
%
%

\begin{abstract}

The paper studies the well-posedness and optimal error estimates of spectral finite element approximations for the boundary value problems of semi-linear elliptic SPDEs driven by white or colored Gaussian noises. The noise term is approximated through the spectral projection of the covariance operator, which is not required to be commutative with the Laplacian operator.  Through the convergence analysis of SPDEs with the noise terms replaced by the projected noises, the well-posedness of the SPDE is established under certain covariance operator-dependent conditions. These SPDEs with projected noises are then numerically approximated with the finite element method. A general error estimate framework is established for the finite element approximations. 
Based on this framework, optimal error estimates of finite element approximations for elliptic SPDEs driven by power-law noises are obtained. It is shown that with the proposed approach, convergence order of white noise driven SPDEs is improved by half for one-dimensional problems, and by an infinitesimal factor for higher-dimensional problems.
\end{abstract}

\maketitle

AMS subject classifications (2010): 60H35, 65M60, 60H15.

Key words: elliptic stochastic partial differential equation, spectral approximations, finite element approximations, power-law noise


\section{Introduction}

In recent years, random disturbance as a form of uncertainty has been increasingly treated as an essential modeling factor in the analysis of complex phenomena. 
Adding such uncertainty to partial differential equations (PDEs) which model such physical and engineering phenomena, one derives stochastic PDEs (SPDEs) as improved mathematical modeling tools. 
SPDEs derived from fluid flows and other engineering fields are often assumed to be driven by white noises that have constant power spectral densities \cite{Gri95}. However, most random fluctuations in complex systems are correlated acting on different frequencies in which case the noises are called colored noises \cite{KMS88}. 

Elliptic SPDEs driven by white noises and colored noises have been considered by many authors, see e.g. \cite{ANZ98, CYY07, DZ02, Zhang2017, Zhang2016} for white noises, \cite{MS06, MM05, Zhang2017} for colored noises determined by Riesz-type kernels, \cite{CHL17, CHL18} for fractional noises, and \cite{WGBS15} for power-law noises. 

The main objective of this study is to investigate the well-posedness and optimal error estimate of spectral finite element approximations for the following semilinear elliptic SPDE:
\begin{align}\label{ell}
\begin{split}
-\Delta u(x, \xi )&=f(u(x, \xi ))+\dot{W}^Q (x, \xi ),\quad x\in \OOO, \ \ \xi \in \Omega, \\
u(x, \xi )&=0,\qquad\qquad\qquad\quad\ x\in \partial \OOO, \ \ \xi \in \Omega. 
\end{split}
\end{align}
Here $(\Omega,\FFF,\pp)$ is a probability space and $\OOO\subset \rr^d$ is a bounded domain with regular boundary $\partial \OOO$, $f:\rr \rightarrow \rr$ is a Lipschitz continuous function, and $\dot{W}^Q$ is a class of centered Gaussian noises with covariance operator $Q$. 

The existence of the unique solution for SPDE \eqref{ell} driven by the white noise, i.e., $Q=I$, has been established in \cite{BP90} by converting the problem into an integral equation. 
In this paper, we establish a covariance operator-dependent condition for the well-posedness of SPDE \eqref{ell} through the convergence analysis for a sequence of solutions of SPDEs with the noise term in SPDE \eqref{ell} replaced by its spectral projections. This sequence of SPDEs will play an important role in constructing our numerical solutions for SPDE \eqref{ell}. 

To obtain numerical solutions, we apply the finite element method to the aforementioned SPDEs whose noises are the spectral projections of the original noise. In previous studies \cite{ANZ98, DZ02}, the noises are approximated by piecewise constants in space. In addition, it is required that the eigenvectors of the Laplacian also diagonalize the covariance operator of the noise, i.e., the Laplacian operator and the covariance operators are commutative. In this study, the commutative assumption is no longer required. Another improvement over the results of \cite{ANZ98, DZ02} is that our numerical solutions achieve better convergence rates. In particular, for SPDEs driven by white noises, we obtain 1.5 order convergence instead of first order in the one-dimensional case,  and we improve the convergence order by an infinitesimal factor in the higher-dimensional cases. 

The paper is organized as follows. At the end of this section, we provide several notations that will be used throughout the rest of the paper. In Section \ref{sec-spe} we derive sufficient and necessary conditions for the existence of the unique solution for SPDE \eqref{ell} and establish its regularity. The error estimate between the solution of the SPDE driven by  the spectral truncation of the original noise and the exact solution of SPDE \eqref{ell}  is also derived. In Section \ref{sec-fem}, we construct finite element approximations to the SPDEs driven by the spectral truncation noises and derive the error estimate. As an application, we use this error estimate to derive optimal order of convergence of the finite element approximations for SPDEs driven by power-law noises and white noises.\\

For $r\in \nn$, we use $(\hh^r,\|\cdot\|_r)$ to denote the usual Sobolev space
\begin{align*}
\hh^r:=\left\{v:\ \|v\|_{\hh^r}:=\left(\sum_{|k|\le r} \|D^k v\|^2\right)^{1/2} <\infty\right\}. 
\end{align*}
When $r=0$, $\hh^0:=\hh$ is the space of square integrable functions on $\OOO$, whose inner product and norm are denoted by $(\cdot,\cdot)$ and $\|\cdot\|$, respectively. 
We also use $\hh^1_0$ (resp., $\hh_0$) to denote the subspace of $\hh^1$ (resp., $\hh$) whose elements vanish on $\partial \OOO$. 
For $s\in \rr$, we use $(\dot\hh^s,\|\cdot\|_s)$ to denote the interpolation space
\begin{align*}
\dot\hh^s:=\left\{v:\ \|v\|_s
:=\left(\sum_{k\in \nn_+} \lambda_k^s(v,\varphi_k)^2\right)^{1/2} <\infty\right\},
\end{align*}
where $\{(\lambda_k,\varphi_k)\}_{k\in \nn_+}$ is an eigensystem of the negative Dirichlet Laplacian.
When $\bs\in \nn$, it is known (see e.g. \cite{Tho06}, Lemma 3.1) that $\dot\hh^\bs$ coincides with the usual Sobolev space $\hh^\bs$ with additional boundary conditions.
In the rest of the paper, we will denote by $C$ a generic positive constant which is independent of the number of spectral truncation terms and the mesh size of finite element triangulations and may differ from one place to another.

\section{Spectral Approximations}
\label{sec-spe}

In this section, we prove the existence of a unique solution of SPDE \eqref{ell} through the spectral projection of the noise, and establish its Sobolev regularity.
We also derive the error estimate between the solution of the SPDE driven by the spectral truncations and the exact solution of \eqref{ell}.

\subsection{Formulations}

Recall that an $\hh$-valued random field $u=\{u(x):x\in \OOO\}$ is said to be a mild solution of SPDE \eqref{ell} if 
\begin{align}\label{mild}
u=A^{-1}f(u)+A^{-1}\dot{W}^Q,\quad \text{a.s.}
\end{align}
Here $A^{-1}=(-\Delta)^{-1}$ is the inverse of negative Dirichlet Laplacian.

For the bounded and open domain $\OOO$ with piecewise smooth boundary $\partial \OOO$, $A$ has discrete and nonnegative eigenvalues $\{\lambda_k\}_{k=1}^\infty$ in an ascending order with finite multiplicity and corresponding smooth eigenvectors $\{\varphi_k\}_{k=1}^\infty$, which vanish on $\partial \OOO$ and form a complete orthonormal basis in $\hh_0$ (see e.g. \cite{GN13}), i.e.,
\begin{align}\label{a}
A\varphi_k=\lambda_k\varphi_k,\quad k\in \nn_+. 
\end{align}
Moreover, the asymptoticity of these eigenvalues is characterized by Weyl's law (see e.g. \cite{GN13}):
\begin{align}\label{weyl}
\lambda_k\asymp k^{\frac 2d},\quad \text{as}\quad k\rightarrow \infty,
\end{align}
where the notation $A\asymp B$ means that there exists a generic positive constant $C$ such that $C^{-1}A\le B\le CA$.
The Weyl's law is our main tool in the optimal error estimation of finite element approximations for SPDE \eqref{ell} with power-law noises.

The centered Gaussian noise $\dot{W}^Q$ is uniquely determined by its covariance operator $Q$.
Assume that $Q$ has $\{(\sigma_k,\psi_k)\}_{k=1}^\infty$ as its eigensystem, i.e.,
\begin{align}\label{q}
Q\psi_m=\sigma_m\psi_m,\quad m\in \nn_+, 
\end{align}
where $\{\psi_k\}_{k=1}^\infty$ form a complete orthonormal basis in $\hh$.
Then we have the following expansion for the infinite dimensional noise $\dot{W}^Q$:
\begin{align}\label{k-l}
\dot{W}^Q(\omega)=\sum_{m=1}^\infty Q^{\frac 12}\psi_m \eta_m(\omega),\quad \omega\in \Omega,
\end{align}
where $\{\eta_m\}_{m=1}^\infty$ are independent and normal random variables. 

To ensure the well-posedness of SPDE \eqref{ell}, we make the following assumption on $f$.

\begin{ap}\label{f}
$f$ is Lipschitz continuous, i.e., 
\begin{align}\label{lip}
\|f\|_{\text{Lip}}:=
\sup_{u,v\in \rr, u\neq v} \frac{|f(u)-f(v)|}{|u-v|}<\infty, 
\end{align}
and the Lipschitz constant $\|f\|_{\text{Lip}}$ is smaller than the positive constant $\gamma$ in the Poincar\'{e} inequality:
\begin{align}\label{poin}
\|\nabla v\|^2\geq \gamma\|v\|^2,\quad \forall\ v\in \hh^1_0. 
\end{align}
\end{ap}


The key assumption for the well-posedness of the SPDE \eqref{ell} is following boundness on the partial differential operator $A$ with respect to the Hilbert-Schmidt norm relating to the covariance operator $Q$. 
\begin{ap}\label{w}
There exists a constant $\beta\in [0,2]$ such that
\begin{align}\label{wq}
\|A^{\frac {\beta-2}2}\|_{\LL_2^0}<\infty,
\end{align}
where $\LL_2^0:=HS(Q^\frac 12(\hh), \hh)$ denotes the space of Hilbert-Schmidt operators from $Q^\frac 12(\hh)$ to $\hh$ and $\|\cdot \|_{\LL_2^0}$ denotes the corresponding norm.
\end{ap}
\begin{rk}
$Q$ is a trace operator if and only if \eqref{wq} holds for $\beta=2$. Another class of noises satisfying Assumption \ref{w} are the power-law noises, where
$Q=A^\rho$ for certain $\rho\in \rr$, or equivalently, 
 $\psi_k=\varphi_k$ and $\sigma_k=\lambda_k^\rho$ for all 
 $k\in \nn$.
In particular, if $\rho=0$, the power-law noise becomes the white noise \cite{DZ02}. As pointed out in \cite{WGBS15}, power-law noises abound in nature and have been observed extensively in both time series and spatially varying environmental parameters.
\end{rk}

\subsection{Well-posedness and Regularity}

The parameter $\beta$ in \eqref{wq} determines the regularity of $\dot{W}^Q$.
In fact, 
\begin{align}\label{w-reg}
\ee \left[\|\dot{W}^Q\|_{\beta-2}^2\right]
&=\ee \left[\left\|\sum_{k=1}^\infty A^{\frac {\beta-2}2} Q^{\frac 12}\psi_k \eta_k \right\|^2\right] \nonumber \\
&=\ee \left[\sum_{m=1}^\infty \left(\sum_{k=1}^\infty 
\left(A^{\frac {\beta-2}2} Q^{\frac 12}\psi_k,\varphi_m \right) \eta_k\right)^2 \right] \nonumber  \\
&=\sum_{m=1}^\infty \sum_{k=1}^\infty 
\left(A^{\frac {\beta-2}2} Q^{\frac 12}\psi_k,\varphi_m \right)^2 \nonumber \\
& =\left\|A^{\frac {\beta-2}2}\right\|_{\LL_2^0}^2
<\infty,
\end{align}
which shows that $\dot{W}^Q\in L^2(\Omega; \dot \hh^{\beta-2})$.

%

Now we consider the well-posedness and regularity of SPDE \eqref{ell}. Let $\pp_N: \hh\rightarrow V_N=\text{span}\{\varphi_m\}_{m=1}^N$ be the projection operator from $\hh$ to $V_N$: 
$(\pp_N u,v)=(u,v)$ for any $u\in \hh$, $v\in V_N$, and let $u_N$ be the solution of the SPDE with the noise term in \eqref{mild} replaced by its spectral projection: 
\begin{align}\label{eq:app_SPDE}
A u_N=f(u_N)+\pp_N \dot{W}^Q,\quad N\in \nn_+. 
\end{align}
Then
\begin{align}\label{spe-mild}
u_N=A^{-1} f(u_N)+A^{-1} \pp_N \dot{W}^Q,\quad N\in \nn_+.
\end{align}

\begin{tm}\label{well}
Let $p\ge 1$ and Assumptions \ref{f} and \ref{w} hold. 
Then SPDE \eqref{ell} possesses a unique solution $u\in L^p(\Omega; \dot \hh^\beta)$.
\end{tm}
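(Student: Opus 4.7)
The plan is to construct $u$ as the limit, as $N\to\infty$, of the truncated solutions $u_N$ defined by \eqref{spe-mild}. The overall engine is a Banach contraction supplied by Assumption \ref{f}: the Poincaré inequality \eqref{poin} forces $\lambda_1\ge\gamma$, so together with $\|f\|_{\text{Lip}}<\gamma$ the map $v\mapsto A^{-1}f(v)$ is a strict contraction in the relevant norms, while the stochastic piece is handled by a separate Gaussian tail estimate built from \eqref{w-reg}.

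First, for each fixed $N$, I would obtain $u_N$ by applying Banach's fixed point theorem to the map $T_Nv:=A^{-1}f(v)+A^{-1}\PP_N\dot{W}^Q$ on $L^p(\Omega;\dot\HH^\beta)$. The forcing $A^{-1}\PP_N\dot{W}^Q$ lies in the finite-dimensional subspace $A^{-1}V_N\subset\dot\HH^\beta$ and is centered Gaussian, hence enjoys $L^p(\Omega)$-moments of every order via Fernique. Contractivity of $T_N$ in the $\dot\HH^\beta$-norm would follow from the spectral inequalities $\|A^{-1}w\|_\beta\le\gamma^{(\beta-2)/2}\|w\|$ and $\|v\|\le\gamma^{-\beta/2}\|v\|_\beta$, both valid for $\beta\in[0,2]$ because $\lambda_k\ge\gamma$ for all $k$; composing them with the pointwise Lipschitz bound on $f$ gives contraction constant $\|f\|_{\text{Lip}}/\gamma<1$.

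Second, to identify a limit and obtain uniqueness, I would show that $\{u_N\}$ is Cauchy in $L^p(\Omega;\dot\HH^\beta)$. Subtracting the equations for $u_N$ and $u_M$ with $N>M$ and absorbing the $f$-term via the same contraction estimate yields
\[
(1-\|f\|_{\text{Lip}}/\gamma)\|u_N-u_M\|_\beta\le\|A^{-1}(\PP_N-\PP_M)\dot{W}^Q\|_\beta.
\]
Mimicking the computation in \eqref{w-reg}, the right-hand side is a centered Gaussian element of $\dot\HH^\beta$ whose $L^2(\Omega)$-norm squared equals $\sum_{M<m\le N}\lambda_m^{\beta-2}\sum_k(Q^{1/2}\psi_k,\varphi_m)^2$, i.e.\ the tail of the convergent series $\|A^{(\beta-2)/2}\|_{\LL_2^0}^2$ granted by Assumption \ref{w}. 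Gaussian hypercontractivity (Kahane-Khintchine) promotes this $L^2$-bound to $L^p$ for every $p\ge 1$. Passing to the limit in \eqref{spe-mild} using the Lipschitz continuity of $f$ and the analogous convergence $A^{-1}\PP_N\dot{W}^Q\to A^{-1}\dot{W}^Q$ in $L^p(\Omega;\dot\HH^\beta)$ shows the limit $u$ satisfies \eqref{mild}; uniqueness in $L^p(\Omega;\dot\HH^\beta)$ then follows from the identical contraction applied to $u-v=A^{-1}(f(u)-f(v))$.

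The most delicate step I anticipate is the derivation of contractivity in the fractional-order $\dot\HH^\beta$-norm itself: because $f$ acts pointwise, $f(v_1)-f(v_2)$ is only directly controlled in $\HH=\dot\HH^0$, so the $\dot\HH^\beta$-contraction is not automatic and hinges on the precise interplay of the two spectral inequalities above, which must compose to exactly restore the factor $\gamma^{-1}$ independently of $\beta$. Once this is secured, the Gaussian $L^p$-upgrade and the spectral tail estimate are routine given Assumption \ref{w}.
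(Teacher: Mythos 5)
Your proof is correct, but it reaches the conclusion by a genuinely different mechanism than the paper. You replace the paper's energy/monotonicity argument with a pure operator-norm contraction: from the Poincar\'e inequality you extract $\gamma\le\lambda_1$, whence $\|A^{-1}(f(v_1)-f(v_2))\|_\beta\le\lambda_1^{(\beta-2)/2}\,\|f\|_{\text{Lip}}\,\lambda_1^{-\beta/2}\|v_1-v_2\|_\beta=(\|f\|_{\text{Lip}}/\lambda_1)\|v_1-v_2\|_\beta$ for $\beta\in[0,2]$, and the nonlinear term is absorbed directly; the ``delicate'' composition of the two spectral inequalities that you flagged does indeed close, since the exponents $(\beta-2)/2$ and $-\beta/2$ sum to $-1$ independently of $\beta$. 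The paper instead tests the difference equation against $-(f(u_N)-f(u_M))$ and combines the Poincar\'e inequality with a Young-type inequality and an averaging inequality to get the pathwise stability bound $\|u_N-u_M\|^2\le C\|E_{M,N}\|^2$ in $\HH$ only, and then recovers the $\dot\HH^\beta$-regularity of the limit a posteriori from $Au=f(u)+\dot W^Q$ together with \eqref{w-reg}. Your route buys a one-pass argument: existence of each $u_N$ (Banach fixed point rather than a citation of elliptic theory), the Cauchy property, the regularity, and the uniqueness all come from the same contraction in $L^p(\Omega;\dot\HH^\beta)$. What the paper's heavier variational argument buys is robustness: as noted in the remark following Assumption \ref{f}, well-posedness is also claimed under a one-sided monotone-type condition on $f$, for which your two-sided Lipschitz contraction would not close but the energy estimate still does. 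The stochastic ingredients --- the identification of $\E\bigl[\|E_{M,N}\|_\beta^2\bigr]$ with a tail of $\|A^{(\beta-2)/2}\|_{\LL_2^0}^2$ and the Gaussian equivalence of moments to pass from $L^2(\Omega)$ to $L^p(\Omega)$ --- coincide with the paper's.
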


\textbf{Proof:}  
We first prove the existence of an $L^p(\Omega; \hh)$-valued solution. 
For each $N\in \nn_+$, the existence of a unique solution $u_N\in \hh^1_0$ a.s. of Eq. \eqref{spe-mild} follows from the classical elliptic PDE theory. 
For $M<N$, set 
$E_{M,N}:=A^{-1}(\pp_N \dot{W}^Q-\pp_M \dot{W}^Q)$. Then
\begin{align*}
u_N-u_M=A^{-1}(f(u_N)-f(u_M))+E_{M,N}.
\end{align*}
Multiplying the above equation by $-((f(u_N)-f(u_M))$ and applying the Lipschitz condition \eqref{lip} and the Poincar\'{e} inequality \eqref{poin}, we deduce
\begin{align}\label{exi}
& -\|f\|_{\text{Lip}} \|u_N-u_M\|^2  \\
\le & -(u_N-u_M,f(u_N)-f(u_M)) \nonumber\\
= & -(A^{-1}(f(u_N)-f(u_M)),f(u_N)-f(u_M))-(E_{M,N},f(u_N)-f(u_M)) \nonumber\\
\le & -\gamma\|A^{-1}(f(u_N)-f(u_M))\|^2+\|E_{M,N}\|
\cdot\|f(u_N)-f(u_M)\|. \nonumber 
\end{align}
Using the Young-type inequality
\begin{align*}
\|\phi_1+\phi_2\|^2\geq \epsilon\|\phi_1\|^2-\frac{2-\epsilon}{1-\epsilon}\|\phi_2\|^2,
\quad \forall\ \epsilon\in(0,1),\ \phi_1,\phi_2\in \hh
\end{align*}
with $\phi_1=u_N-u_M,\phi_2=-E_{M,N}$ and $\epsilon=\frac{\|f\|_{\text{Lip}}+\gamma}{2\gamma}$, we obtain
\begin{align*}
&\|A^{-1}(f(u_N)-f(u_M))\|^2 \\
= & \|(u_N-u_M)-E_{M,N}\|^2\nonumber \\
\geq & \frac{\|f\|_{\text{Lip}}+\gamma}{2\gamma}\|u_N-u_M\|^2-\frac{3\gamma-\|f\|_{\text{Lip}}}{\gamma-\|f\|_{\text{Lip}}}\|E_{M,N}\|^2.
\end{align*}
The average inequality $a\cdot b\le \frac{\gamma-\|f\|_{\text{Lip}}}{4\|f\|_{\text{Lip}}^2} a^2+\frac{\|f\|_{\text{Lip}}^2}{\gamma-\|f\|_{\text{Lip}}}b^2$ 
with $a=\|u_N-u_M\|$ and $b=\|E_{M,N}\|$ yields 
\begin{align*}
& \|E_{M,N}\|\cdot\|f(u_N)-f(u_M)\| \\
\le & \frac{\|f\|_{\text{Lip}}^2}{\gamma-\|f\|_{\text{Lip}}}\|E_{M,N}\|^2
+\frac{\gamma-\|f\|_{\text{Lip}}}{4} \|u_N-u_M\|^2. 
\end{align*}
Substituting the above two inequalities into \eqref{exi}, we deduce 
\begin{align}\label{con}
&\|u_N-u_M\|^2
\le 4(3\gamma^2-\|f\|_{\text{Lip}} \gamma+\|f\|_{\text{Lip}}^2)
\|E_{M,N}\|^2.
\end{align}
The moment property of Gaussian process and calculations similarly to \eqref{w-reg} give
\begin{align*}
\ee\left[\|E_{M,N}\|^p\right]
= & C_p\left(\ee\left[\|E_{M,N}\|^2\right]\right)^\frac p2 \\
= & C_p\sum_{k=M+1}^N\sum_{m=1}^\infty 
(A^{-1} Q^\frac 12 \psi_m,\varphi_k )^2,
\end{align*}
which tends to zero as $N,M\rightarrow\infty$ under the conditon \eqref{wq} with $\beta=0$. 
As a consequence, $\{u_N\}$ is a Cauchy sequence in $L^p(\Omega; \dot \hh)$ hence converges to an element $u\in L^p(\Omega; \dot \hh)$. 
The existence of a mild solution of SPDE \eqref{ell} then follows from taking the limit in Eq. \eqref{spe-mild}.

Next we prove the uniqueness. 
Assume that $u,v$ are both solutions of Eq. \eqref{mild}. 
 Arguments similar to the proof of existence yield
\begin{align*}
\|u-v\|
\le 4(3\gamma^2-\|f\|_{\text{Lip}} \gamma+\|f\|_{\text{Lip}}^2)
\|A^{-1}\dot{W}^Q-A^{-1}\dot{W}^Q\|^2=0,
\end{align*}
from which we conclude that $u=v$.

Finally we prove that $u\in L^p(\Omega; \hh^\beta)$ for any $p\ge 1$.
To this end, we first use the Young inequality to obtain 
\begin{align*}
\ee \left[\|u\|_{\beta}^p\right]
&\leq C \ee\left[\|f(u)\|_{\beta-2}^p\right]
+\left(\ee \left[\|\dot{W}^Q\|_{\beta-2}^2\right]\right)^\frac p2. 
\end{align*}
Since the $\hh^\beta$-norm is increasing with respect to 
$\beta\in [0,2]$, \eqref{lip} implies that 
\begin{align*}
\ee \left[\|f(u)\|_{\beta-2}^p\right]
\le \ee\left[\|f(u)\|^p\right]
\le C(1+\ee\left[\|u\|^p\right]) <\infty.
\end{align*}
Substituting \eqref{w-reg} into the above two inequalities, we conclude that 
$\ee [\|u\|_{\beta}^2]<\infty$. The proof is complete. $\Box$\\

\subsection{Error Estimates for Spectral Truncations}

In this subsection we estimate the error between the solution of \eqref{eq:app_SPDE} and the exact solution of SPDE \eqref{ell}. 
First we consider the regularity spectral projection of the noise given by $\pp_N\dot{W}^Q$. Note that 
 $\{\lambda_m\}_{m=1}^\infty$ is increasing, thus for any $\alpha\geq \beta-2$ and any $N\in \nn_+$, we have 
\begin{align}\label{spe-w-reg}
\ee \left[\|\pp_N\dot{W}^Q\|_\alpha^2\right]
&=\ee \left[\left\|\sum_{k=1}^\infty A^{\frac {\alpha}2}\pp_N Q^{\frac 12}\psi_k \eta_k \right\|^2\right] \nonumber \\
&=\ee \left[\sum_{m=1}^\infty \left(\sum_{k=1}^\infty 
\left(\pp_NA^{\frac {\alpha}2} Q^{\frac 12} \psi_k,\varphi_m \right) \eta_k\right)^2 \right] \nonumber  \\
&=\sum_{m=1}^N \sum_{k=1}^\infty \lambda_m^{2-\beta+\alpha}
\left(A^{\frac {\beta-2}2} Q^{\frac 12}\psi_k,\varphi_m \right)^2  \nonumber  \\
& \le \lambda_N^{2-\beta+\alpha} \|A^{\frac {\beta-2}2}\|_{\LL_2^0}^2.
\end{align}

Let $E_N:=A^{-1}(I-\pp_N)\dot{W}^Q$.
Then for any $\alpha\in [0,\beta]$ and any $N\in \nn_+$, 
\begin{align}\label{spe-w-err}
\ee\left[\|E_N\|_\alpha^2\right] 
& =\sum_{m=N+1}^\infty \sum_{k=1}^\infty \lambda_m^{\alpha-\beta}
\left(A^{\frac {\beta-2}2} Q^{\frac 12}\psi_k,\varphi_m \right)^2 \nonumber  \\ 
& \le \lambda_{N+1}^{\alpha-\beta} \|A^{\frac {\beta-2}2}\|_{\LL_2^0}^2.
\end{align}

The above calculations lead to the following error estimation between the solution $u_N$ of Eq. \eqref{spe-mild} and the solution $u$ of Eq. \eqref{mild},
as well as the Sobolev regularity of $u_N$, which will be used in the error estimation of finite element approximations.

\begin{tm}\label{well-un}
Let $p\ge 1$.  Assume that \ref{f} and \ref{w} hold and $u$ and $u_N$, $N\in \nn_+$, are the solutions of Eq. \eqref{mild} and Eq. \eqref{spe-mild}, respectively. 
Then $u_N\in L^p(\Omega; \dot\hh^2)$ and there exists a constant $C$ such that 
\begin{align}\label{spe-reg1}
\ee\left[\|u_N\|_2^p\right]
\le C\lambda_N^\frac{(2-\beta)p}2 
(1+\|A^{\frac {\beta-2}2}\|_{\LL_2^0}^p).
\end{align}
Assume furthermore that $f$ has bounded derivatives up to order $r-1$ with 
$r\geq 2$. Then $u_N\in L^p(\Omega; \dot\hh^{r+1})$ and 
\begin{align}\label{spe-reg2}
\ee\left[\|u_N\|_{r+1}^p\right]
\le C\lambda_N^{\frac{(r+1-\beta)p}2} 
(1+\|A^{\frac {\beta-2}2}\|_{\LL_2^0}^p).
\end{align}
Moreover,
\begin{align}\label{uun}
\left(\ee\left[\|u-u_N\|^p\right]\right)^\frac 1p
\le C\lambda_{N+1}^{-\frac {\beta} 2} 
\left( 1+\|A^{\frac {\beta-2}2}\|_{\LL_2^0} \right).
\end{align}
\end{tm}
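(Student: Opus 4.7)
The plan is to work with the strong form $A u_N = f(u_N) + \PP_N \dot W^Q$ obtained by applying $A$ to Eq. \eqref{spe-mild}, exploit the regularity bounds for the projected noise proved in \eqref{spe-w-reg}--\eqref{spe-w-err}, and piggyback on the Young-inequality contraction argument already developed in the proof of Theorem \ref{well}.

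For \eqref{spe-reg1}, I would use the identification $\|v\|_2 = \|Av\|$ on $\dot\HH^2$ to write $\|u_N\|_2 \le \|f(u_N)\| + \|\PP_N \dot W^Q\|$. The Lipschitz condition \eqref{lip} bounds the first summand by $|f(0)| + \|f\|_{\text{Lip}} \|u_N\|$, whose $L^p(\Omega)$ norm is uniform in $N$ by rerunning the contraction estimate \eqref{con} with $u_M \equiv 0$. For the second summand, I would invoke \eqref{spe-w-reg} with $\alpha = 0$ together with Gaussian equivalence of moments for $\PP_N \dot W^Q$, delivering the $\lambda_N^{(2-\beta)/2}$ growth claimed.

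For \eqref{spe-reg2}, I would bootstrap elliptic regularity: from $A u_N = f(u_N) + \PP_N \dot W^Q$ one has $\|u_N\|_{\br+1} \le C(\|f(u_N)\|_{\br-1} + \|\PP_N \dot W^Q\|_{\br-1})$. Plugging $\alpha = \br - 1$ into \eqref{spe-w-reg} controls the noise contribution by $C\lambda_N^{(\br+1-\beta)/2}\|A^{\frac{\beta-2}{2}}\|_{\LL_2^0}$, and the nonlinear term is handled by a Moser-type composition estimate $\|f(u_N)\|_{\br-1} \le C(1 + \|u_N\|_{\br-1})$, which uses boundedness of the derivatives of $f$ up to order $\br-1$ together with Sobolev embedding. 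The assertion then follows by induction on $\br$, starting from the base case $\br = 2$ already covered by \eqref{spe-reg1}.

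For the error bound \eqref{uun}, subtracting Eq. \eqref{mild} from Eq. \eqref{spe-mild} yields $u - u_N = A^{-1}(f(u) - f(u_N)) + E_N$ with $E_N = A^{-1}(I - \PP_N)\dot W^Q$. The very same Young-inequality manipulations used in \eqref{exi}--\eqref{con} apply verbatim, producing $\|u - u_N\|^2 \le C \|E_N\|^2$; the bound \eqref{spe-w-err} with $\alpha = 0$ then delivers the stated $\lambda_{N+1}^{-\beta/2}$ rate, and Gaussian moment equivalence upgrades to arbitrary $p$. I expect the main difficulty to be the Moser-type nonlinear estimate needed for \eqref{spe-reg2}, since it is dimension-sensitive through Sobolev embedding; one also has to check that the boundary compatibility conditions implicit in $\dot\HH^{\br+1}$ are preserved, which is automatic here because each $\varphi_k$ satisfies them by construction.
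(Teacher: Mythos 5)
Your proposal follows essentially the same route as the paper: elliptic regularity $\|u_N\|_2=\|f(u_N)+\PP_N\dot W^Q\|$ plus the Lipschitz bound and \eqref{spe-w-reg} with $\alpha=0$ for \eqref{spe-reg1}, recursion with \eqref{spe-w-reg} at $\alpha=\br-1$ for \eqref{spe-reg2}, and the Young-inequality contraction applied to $u-u_N=A^{-1}(f(u)-f(u_N))+E_N$ together with \eqref{spe-w-err} for \eqref{uun}. The only cosmetic difference is that the paper obtains the a priori bound on $\|u_N\|$ by testing Eq. \eqref{spe-mild} against $u_N$ and using the Poincar\'e inequality (yielding \eqref{uw}), whereas you rerun the contraction argument against a (deterministic) comparison solution; both suffice, and your explicit mention of the Moser-type composition estimate simply spells out what the paper compresses into ``by recursion.''
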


Proof: 
We first prove \eqref{spe-reg1} and \eqref{spe-reg2}.
Since $f$ is Lipschitz continuous, there exists a constant $C$ such that
\begin{align*}
\|u_N\|_2=\|f(u_N)+\pp_N \dot{W}^Q\|
\le C(1+\|u_N\|+\|\pp_N \dot{W}^Q\|).
\end{align*}
Taking inner product with $u_N$ in Eq. \eqref{spe-mild}, using integration by parts formula and Poincar\'{e} inequality \eqref{poin}, we have that 
\begin{align*}
&(\gamma-\|f\|_{\text{Lip}})\|u_N\|^2-|f(0)|\cdot \|u_N\| 
\\
\le & (\nabla u_N,\nabla u_N)-(f(u_N),u_N)\\
= & (\pp_N \dot{W}^Q,u_N) 
\le \|\pp_N \dot{W}^Q\|\cdot \|u_N\|,
\end{align*}
from which we obtain
\begin{align}\label{uw}
\|u_N\|\le \frac{|f(0)|+\|\pp_N \dot{W}^Q\|}{\gamma-\|f\|_{\text{Lip}}}.
\end{align}
We conclude \eqref{spe-reg1} by combining the above equations and \eqref{spe-w-reg} with $\alpha=0$.
By recursion, we obtain \eqref{spe-reg2} by \eqref{spe-w-reg} with $\alpha=r-1$.

To prove \eqref{uun}, we subtract Eq. \eqref{spe-mild} from Eq. \eqref{mild} to obtain 
\begin{align*}
u-u_N=A^{-1}(f(u)-f(u_N))+E_N. 
\end{align*}
Similarly to \eqref{con}, we have
 \begin{align}\label{con1}
&\|u-u_N\|^2
\le 4(3\gamma^2-\|f\|_{\text{Lip}} \gamma+\|f\|_{\text{Lip}}^2) 
\|E_N\|^2.
\end{align}
Substituting the estimations \eqref{uw} and \eqref{spe-w-err} for $E_N$, we obtain \eqref{uun}.
$\Box$\\
\begin{rk}
We remark that the well-posedness of SPDE \eqref{ell} is also valid for non-Lipschitz assumptions on $f$ possibly depending on the spatial variable, which was proposed in \cite{CHL18, CYY07}. In particular, we may assume that there exist positive constants $L_1<\gamma$ and $L_2$ such that for any $x\in \OOO$ and any $u,v\in \rr$,
\begin{align*}
(f(x,u)-f(x,v),u-v)\geq -L_1|u-v|^2
\end{align*}
and
\begin{align*}
|f(x,u)-f(x,v)|\le L_2(1+|u-v|).
\end{align*}
Similar to \cite{CHL18, CYY07}, our arguments for spectral projection approximations and finite element approximations are also valid under the above assumption on $f$.
In that case, all the convergence rates halve.
\end{rk}

\section{Finite Element Approximations}
\label{sec-fem}

In this section, we establish a general framework of constructing finite element approximations of the spectrally truncated noise driven Eq. \eqref{spe-mild} and derive their error estimates.
Then we apply this framework to the discretization of SPDE \eqref{ell} driven by power-law noises.

\subsection{Finite Element Approximations}
\label{subsec-fem}

Let ${\mathcal{T}_h}$ be a quasiuniform family of triangulations of $\OOO$ with meshsize $h\in(0,1)$. We choose the finite element space $V_h$ consisting of continuous piecewise polynomials of degree $r$ such that
\begin{align}
\inf_{v\in V_h} \|v-v_h\|_{\hh^\bs}
\leq C h^{k-\bs}\|v\|_{\hh^k},
\quad \forall\ v\in \hh^k,\ \bs\le k\le r+1. \label{int-fem}
\end{align}
The variational formulation of Eq. \eqref{spe-mild} is to find 
$u_N\in \hh_0^1$ such that
\begin{align}\label{spe-var}
(\nabla u_N,\nabla v)=(f(u_N),v)+(\pp_N\dot{W}^Q,v),\quad \forall\ v\in \hh_0^1. 
\end{align}
Then the finite element approximation to \eqref{spe-var} is to find $u_N^h\in V_h$ such that
\begin{align}\label{unh}
(\nabla u_N^h,\nabla v)=(f(u_N^h),v)+(\pp_N \dot{W}^Q,v),\quad \forall\ v\in V_h. 
\end{align}

In order to estimate the error $u_N-u_N^h$, we need the Ritz projection operator $\RR_h: \hh_0^1( \OOO)\rightarrow V_h$ defined by
\begin{align}\label{pro}
(\nabla \RR_h w,\nabla v)=(\nabla w,\nabla v),\ \forall\ v\in V_h,\ w\in \hh_0^1( \OOO). 
\end{align}
It is well-known that (see e.g. \cite{Tho06})
\begin{align}\label{pro-err}
\|w-\RR_h w\|
\leq C h^{r+1}\|w\|_{\hh^{r+1}},\ \forall\ w\in \hh_0^1 \cap \hh^{r+1}. 
\end{align}

\begin{tm}\label{ununh}
Let $p\ge 1$ and Assumptions \ref{f} and \ref{w} hold, and $u_N$ and $u_N^h$ be the solutions of Eq. \eqref{spe-mild} and Eq. \eqref{unh}, respectively. 
Then there exists a constant $C$ such that 
\begin{align}\label{ununh1}
\left(\ee\left[\|u_N-u_N^h\|^p\right]\right)^\frac1p
\le Ch^2\lambda_N^{\frac{2-\beta}2} 
\left( 1+\|A^{\frac {\beta-2}2}\|_{\LL_2^0}\right).
\end{align}
Assume furthermore that $f$ has bounded derivatives up to order $r-1$ for some $r\geq 2$. Then
\begin{align}\label{ununh2}
\left(\ee\left[\|u_N-u_N^h\|^p\right]\right)^\frac1p
\le Ch^{r+1} \lambda_N^{\frac{r+1-\beta}2} 
\left( 1+\|A^{\frac {\beta-2}2}\|_{\LL_2^0} \right).
\end{align}
\end{tm}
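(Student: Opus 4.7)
\textbf{Proof proposal for Theorem \ref{ununh}.} The plan is to run a standard finite element argument using the Riesz projection $\RR_h$ defined in \eqref{pro}, combined with Galerkin orthogonality and the Lipschitz/Poincar\'e bound from Assumption \ref{f}, and then to insert the regularity estimates \eqref{spe-reg1} and \eqref{spe-reg2} from Theorem \ref{well-un} in the final step. The key observation is that the noise $\PP_N\dot{W}^Q$ acts in \eqref{spe-var} and \eqref{unh} as a genuine $\HH$-valued right-hand side; once we pass to the error equation, the noise disappears entirely and the analysis reduces to a deterministic elliptic problem with right-hand side $f(u_N)-f(u_N^h)$.

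First, I would split $u_N-u_N^h=\rho+\theta$ with $\rho:=u_N-\RR_h u_N$ and $\theta:=\RR_h u_N-u_N^h\in V_h$. Subtracting \eqref{unh} from \eqref{spe-var} restricted to test functions $v\in V_h$, and using the defining property \eqref{pro} of $\RR_h$, I get the Galerkin-type identity
\begin{align*}
(\nabla\theta,\nabla v)=(f(u_N)-f(u_N^h),v),\qquad \forall\,v\in V_h.
\end{align*}
Testing with $v=\theta$, applying the Cauchy--Schwarz inequality, the Lipschitz bound \eqref{lip}, and the Poincar\'e inequality \eqref{poin}, I obtain
\begin{align*}
\gamma\|\theta\|^2\le\|\nabla\theta\|^2\le\|f\|_{\text{Lip}}\,\|u_N-u_N^h\|\,\|\theta\|,
\end{align*}
hence $\|\theta\|\le(\|f\|_{\text{Lip}}/\gamma)\|u_N-u_N^h\|$. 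A triangle inequality $\|u_N-u_N^h\|\le\|\rho\|+\|\theta\|$ together with the strict inequality $\|f\|_{\text{Lip}}<\gamma$ in Assumption \ref{f} allows me to absorb $\|\theta\|$ on the left and conclude
\begin{align*}
\|u_N-u_N^h\|\le\frac{\gamma}{\gamma-\|f\|_{\text{Lip}}}\,\|u_N-\RR_h u_N\|.
\end{align*}

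At this point everything is pathwise, and only the Riesz projection error remains. For \eqref{ununh2} I simply invoke \eqref{pro-err} pathwise to get $\|\rho\|\le Ch^{\br+1}\|u_N\|_{\br+1}$, take $L^p(\Omega)$-norms, and apply the regularity estimate \eqref{spe-reg2} for $u_N\in L^p(\Omega;\dot\HH^{\br+1})$. For \eqref{ununh1}, which only assumes the basic $\dot\HH^2$-regularity \eqref{spe-reg1}, I need the lower-order companion of \eqref{pro-err}, namely $\|w-\RR_h w\|\le Ch^2\|w\|_2$ for all $w\in\HH_0^1\cap\dot\HH^2$; this is a standard Aubin--Nitsche duality estimate which holds for every $\br\ge 1$ (the dual problem $-\Delta z=w-\RR_h w$ with homogeneous Dirichlet data gives $z\in\dot\HH^2$ with $\|z\|_2\lesssim\|w-\RR_h w\|$, and Galerkin orthogonality of $\RR_h$ against $V_h$ paired with the interpolation estimate \eqref{int-fem} closes the loop).

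The main (minor) obstacle is precisely this last point: since \eqref{pro-err} is quoted only at the top regularity $\HH^{\br+1}$, I need to justify the $h^2$-rate in \eqref{ununh1} by the duality argument sketched above, which in turn relies on full $H^2$-regularity of the Dirichlet problem on $\OOO$ (available under the piecewise smooth boundary assumption made at the start of Section \ref{sec-spe}). After this, taking $p$-th moments, using the bound on $\|u_N-u_N^h\|$ in terms of $\|\rho\|$, and substituting the Sobolev regularity estimates from Theorem \ref{well-un} delivers both \eqref{ununh1} and \eqref{ununh2} with the stated constants.
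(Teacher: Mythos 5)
Your proof is correct and follows essentially the same route as the paper: Galerkin orthogonality through the Riesz projection $\RR_h$, absorption of the nonlinear term using $\|f\|_{\text{Lip}}<\gamma$ together with the Poincar\'e inequality, and then substitution of the regularity bounds \eqref{spe-reg1}--\eqref{spe-reg2} after taking $p$-th moments. Your triangle-inequality absorption of $\theta$ and your explicit Aubin--Nitsche justification of the $h^2$ projection rate for general $\br\ge 1$ are only minor tidyings of the paper's argument (the paper instead uses a Young-type inequality with explicit constants and simply cites \eqref{pro-err} ``with $\br=1$'').
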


Proof: 
It follows from \eqref{spe-var}, Eq. \eqref{unh} and \eqref{pro} that
\begin{align}
& (\nabla(\RR_hu_N-u_N^h),\nabla(\RR_hu_N-u_N^h)) \nonumber \\
= & (f(u_N)-f(u_N^h),\RR_hu_N-u_N^h).
\end{align}
 Eq. \eqref{lip} and the average inequality 
 \begin{align*}
a\cdot b
\le \frac{\gamma-\|f\|_{\text{Lip}}}{2\|f\|_{\text{Lip}}^2} a^2+\frac{\|f\|_{\text{Lip}}^2}{2(\gamma-\|f\|_{\text{Lip}})}b^2
\end{align*} 
with $a=\|u_N-u_N^h\|$ and $b=\|u_N-\RR_hu_N\|$ yield
\begin{align}
&\|\nabla(\RR_hu_N-u_N^h)\|^2 \nonumber\\
= & (f(u_N)-f(u_N^h),\RR_hu_N-u_N)
+(f(u_N)-f(u_N^h),u_N-u_N^h) \nonumber\\
\le & \frac{\gamma+\|f\|_{\text{Lip}}}{2}\|u_N-u_N^h\|^2+\frac{\|f\|_{\text{Lip}}^2}{2(\gamma-\|f\|_{\text{Lip}})}\|\RR_hu_N-u_N\|^2.
\end{align}

Applying the triangle inequality, the Poincar\'{e} inequality (2.4), and the above inequality, we obtain
\begin{align}
& \gamma \|u_N-u_N^h\|^2 \nonumber \\
\le & \gamma \|u_N-\RR_h u_N\|^2 + \gamma \|\RR_h u_N-u_N^h\|^2 \nonumber \\
\le & \gamma \|u_N-\RR_h u_N)\|^2 + \|\nabla(\RR_h u_N-u_N^h)\|^2 \nonumber\\
\le & \frac{\gamma+\|f\|_{\text{Lip}}}{2}\|u_N-u_N^h\|^2
+ \Big(\gamma + \frac{\|f\|_{\text{Lip}}^2}{2(\gamma-\|f\|_{\text{Lip}})} \Big) 
\|\RR_hu_N-u_N\|^2. 
\end{align}
From the above and the standard estimation \eqref{pro-err} with $r=1$, we have 
\begin{align}\label{unpun}
\|u_N-u_N^h\|
\le C\|u_N-\RR_hu_N\|
\le Ch^2\|u_N\|_2,
\end{align}
and thus \eqref{ununh1} holds.
By \eqref{pro-err} and \eqref{spe-reg2} in Theorem \ref{well-un}, we obtain \eqref{ununh2}.
$\Box$\\

The next theorem states the main result of the paper. 
\begin{tm} \label{uunh}
Let $p\ge 1$ and Assumptions \ref{f} and \ref{w} hold, and $u$ and $u_N^h$ be the solutions of SPDE \eqref{ell} and Eq. \eqref{unh}, respectively. 
Then there exists a constant $C$ such that 
\begin{align}\label{uunh1}
\left(\ee\left[\|u-u_N^h\|^p\right]\right)^\frac1p
\le C\left( N^{-\frac {\beta} d}+h^2 N^{\frac{2-\beta} d} \right) 
\left( 1+\|A^{\frac {\beta-2}2}\|_{\LL_2^0} \right).
\end{align}
In particular, if $f$ has bounded derivatives up to order $r-1$ for some $r\geq 2$, then 
\begin{align}\label{uunh2}
\left(\ee\left[\|u-u_N^h\|^p\right]\right)^\frac1p
\le C\left( N^{-\frac {\beta} d}
+h^{r+1} N^{\frac{r+1-\beta} d} \right)
\left( 1+\|A^{\frac {\beta-2}2}\|_{\LL_2^0} \right).
\end{align}
\end{tm}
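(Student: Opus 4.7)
The plan is to derive the estimate by a triangle inequality splitting the total error into a spectral truncation error and a finite element discretization error, and then to apply Weyl's law to convert eigenvalue bounds into $N$-dependent bounds.

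First, I would write
\begin{align*}
\bigl(\E[\|u-u_N^h\|^p]\bigr)^{1/p}
\le \bigl(\E[\|u-u_N\|^p]\bigr)^{1/p}
+\bigl(\E[\|u_N-u_N^h\|^p]\bigr)^{1/p}.
\end{align*}
For the first term, Theorem \ref{well-un}, specifically the estimate \eqref{uun}, gives
\begin{align*}
\bigl(\E[\|u-u_N\|^p]\bigr)^{1/p}
\le C\lambda_{N+1}^{-\beta/2}\bigl(1+\|A^{(\beta-2)/2}\|_{\LL_2^0}\bigr).
\end{align*}
For the second term, Theorem \ref{ununh} supplies either \eqref{ununh1} with the factor $h^2\lambda_N^{(2-\beta)/2}$ under the base Lipschitz assumption, or \eqref{ununh2} with the factor $h^{\br+1}\lambda_N^{(\br+1-\beta)/2}$ under the additional smoothness assumption on $f$.

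Next, I would invoke Weyl's law \eqref{weyl}, which asserts $\lambda_k\asymp k^{2/d}$ as $k\to\infty$. In particular $\lambda_{N+1}^{-\beta/2}\asymp N^{-\beta/d}$ and $\lambda_N^{(2-\beta)/2}\asymp N^{(2-\beta)/d}$, as well as $\lambda_N^{(\br+1-\beta)/2}\asymp N^{(\br+1-\beta)/d}$. Substituting these asymptotics into the two bounds above and absorbing constants into $C$, I would obtain \eqref{uunh1} in the base case and \eqref{uunh2} in the smoother case.

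There is no real obstacle here since the hard analytic work has already been done in Theorems \ref{well-un} and \ref{ununh}: the proof is essentially an assembly step combined with the conversion between eigenvalue indices and the truncation parameter $N$ via Weyl's law. The only point that requires minor care is making sure the $\asymp$ in \eqref{weyl} is handled uniformly in $N$, so that the implicit constants do not depend on $N$ or $h$, which follows from the genericity of the constant $C$ stated in the introduction.
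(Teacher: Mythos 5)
Your proposal matches the paper's own proof: the paper likewise obtains \eqref{uunh1}--\eqref{uunh2} by combining the spectral truncation bound \eqref{uun} from Theorem \ref{well-un} with the finite element bounds \eqref{ununh1}--\eqref{ununh2} from Theorem \ref{ununh} via the triangle inequality, and then converting $\lambda_N$ and $\lambda_{N+1}$ into powers of $N$ through Weyl's law \eqref{weyl}. The argument is correct and is essentially the same assembly step the authors perform.
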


\textbf{Proof:}  
The estimations \eqref{uunh1}--\eqref{uunh2} follows immediately from \eqref{uun} in Theorem \ref{well-un} and \eqref{ununh1}--\eqref{ununh2} in Theorem \ref{ununh} as well as Weyl's law \eqref{weyl}. $\Box$\\

\begin{rk}
When $h=\OO (N^{-\frac 1d})$, we obtain the optimal convergence rate, independent of the choice of $r$:
\begin{align}
\left(\ee\left[\|u-u_N^h\|^p\right]\right)^\frac1p
\le Ch^\beta \left( 1+\|A^{\frac {\beta-2}2}\|_{\LL_2^0} \right),
\end{align}
which coincides with the regularity established in Theorem \ref{well}.
\end{rk}

\subsection{Applications to Power-law Noises}
\label{subsec-pow}

In this subsection we apply the error estimate results to SPDEs driven by the power-law noises, where the covariance operator $Q$ is given by $Q=A^\rho$, $\rho\in \rr$.
\begin{tm}
Let $p\ge 1$ and Assumption \ref{f} hold. Then we have the following well-posedness and error estimate results. 

(i) The power-law noise driven SPDE \eqref{ell} has a unique mild solution (see \eqref{mild}) if and only if 
$\rho<2-\frac d2$. Moreover, in case the mild solution $u$ exists, $u\in \hh^{2-\frac d2-\rho-\epsilon}$ a.s. for any positive $\epsilon$

(ii) If  $-\frac d2<\rho<2-\frac d2$, then
\begin{align}\label{pow-1}
\left(\ee\left[\|u-u_N^h\|^p\right]\right)^\frac1p
\le C(N^{\frac {\rho-2}d+\frac 12}
+h^2 N^{\frac{\rho}d+\frac 12}). 
\end{align}
If, in addition, $f$ has bounded derivatives up to order $r-1$ for some 
$r\geq 2$, then
\begin{align}\label{pow-2}
\left(\ee\left[\|u-u_N^h\|^p\right]\right)^\frac1p
\le C(N^{\frac {\rho-2}d+\frac 12}
+h^{r+1}N^{\frac{\rho+r-1}d +\frac 12}).
\end{align}
\end{tm}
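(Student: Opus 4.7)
The plan is to specialize the abstract framework developed in Theorems \ref{well}, \ref{well-un}, and \ref{uunh} to the case $Q = A^\rho$. Under this choice the eigenvectors of $Q$ coincide with those of $A$ (so $\psi_k = \varphi_k$ and $\sigma_k = \lambda_k^\rho$), so every Hilbert-Schmidt norm or spectral partial sum arising in the framework collapses to an explicit series $\sum_k \lambda_k^\gamma$, whose asymptotics are read off from Weyl's law \eqref{weyl} via the equivalence with $\sum_k k^{2\gamma/d}$.

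For the well-posedness claim (1), I would start with the identity
\begin{align*}
\|A^{(\beta-2)/2}\|_{\LL_2^0}^2
= \sum_{k=1}^\infty \lambda_k^{\beta+\rho-2}
\asymp \sum_{k=1}^\infty k^{2(\beta+\rho-2)/d},
\end{align*}
which is finite if and only if $\beta < 2 - \rho - d/2$. For sufficiency, when $\rho < 2 - d/2$ one picks any $\beta \in [0, 2 - \rho - d/2)$ and applies Theorem \ref{well}; taking $\beta = 2 - d/2 - \rho - \epsilon$ delivers the stated regularity. For necessity, if $\rho \geq 2 - d/2$ then $\E\|A^{-1}\dot{W}^Q\|^2 = \sum_m \lambda_m^{\rho-2}$ diverges by Weyl's law, so the stochastic convolution fails to lie in $L^2(\Omega;\HH)$ and \eqref{mild} admits no solution of the required form.

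For the error estimates (2), I would split $\|u - u_N^h\| \leq \|u - u_N\| + \|u_N - u_N^h\|$ and revisit the proofs of \eqref{uun} and \eqref{ununh1}--\eqref{ununh2}, replacing the generic Hilbert-Schmidt bounds \eqref{spe-w-reg}--\eqref{spe-w-err} by the exact power-law sums. For the spectral truncation error,
\begin{align*}
\E\|E_N\|^2 = \sum_{m>N} \lambda_m^{\rho-2} \asymp N^{2(\rho-2)/d + 1},
\end{align*}
so \eqref{con1} gives $(\E\|u - u_N\|^p)^{1/p} \leq C N^{(\rho-2)/d + 1/2}$. For the finite-element error, one bounds $\|u_N\|_2 \leq C(1 + \|u_N\| + \|\PP_N\dot{W}^Q\|)$, and the identity
\begin{align*}
\E\|\PP_N\dot{W}^Q\|^2 = \sum_{m=1}^N \lambda_m^\rho \asymp N^{2\rho/d + 1}
\end{align*}
(valid under $\rho > -d/2$) combined with \eqref{unpun} produces $(\E\|u_N - u_N^h\|^p)^{1/p} \leq C h^2 N^{\rho/d + 1/2}$; summing yields \eqref{pow-1}. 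The bound \eqref{pow-2} follows by the same scheme at order $\alpha = \br - 1$, since $\E\|\PP_N\dot{W}^Q\|_{\br-1}^2 = \sum_{m=1}^N \lambda_m^{\br-1+\rho} \asymp N^{2(\br-1+\rho)/d + 1}$ supplies the $\dot\HH^{\br+1}$-regularity of $u_N$ needed to activate \eqref{ununh2}.

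The main technical point I anticipate is this circumvention of the constant $\|A^{(\beta-2)/2}\|_{\LL_2^0}$: invoking Theorem \ref{uunh} verbatim with the largest admissible $\beta$ strictly below the threshold $2 - \rho - d/2$ would inject an unavoidable $\epsilon/d$ loss in each exponent, whereas the explicit spectral sums above are sharp at the threshold provided the assumption $-d/2 < \rho$ keeps the relevant partial sums off their logarithmic borderline.
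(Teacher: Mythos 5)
Your proposal is correct and follows essentially the same route as the paper: specialize the Hilbert--Schmidt norms and spectral sums to explicit series $\sum_k\lambda_k^{\gamma}$, apply Weyl's law \eqref{weyl} together with $\sum_{k=1}^N k^\delta\asymp N^{\delta+1}$ for $\delta>-1$, and crucially work with the exact sums $\sum_{m>N}\lambda_m^{\rho-2}$ and $\sum_{m\le N}\lambda_m^{\rho}$ rather than invoking Theorems \ref{well-un} and \ref{uunh} verbatim, which would cost an $\epsilon$ in the exponents. Your explicit treatment of the necessity direction in (1) and of why $\rho>-\frac d2$ keeps the partial sums off the logarithmic borderline only makes precise what the paper leaves implicit.
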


\textbf{Proof:}  
It suffices to verify that the conditions of Theorem \ref{well}, Theorem \ref{well-un}, and Theorem \ref{uunh} hold.
Set $Q=A^\rho$, we get, by Weyl's law \eqref{weyl},
\begin{align}
\|A^{\frac {\beta-2} 2}\|_{\LL_2^0}^2
=\|A^{\frac {\beta-2+\rho} 2}\|_{HS}^2=
\sum_{k=1}^\infty \lambda_k^{\beta-2+\rho}
\asymp \sum_{k=1}^\infty k^{\frac {2(\beta-2+\rho)}d}. 
\end{align}
The above series converges if and only if $\beta<2-\frac d2-\rho$, which is the condition \eqref{wq} in Theorem \ref{well}. 
(i) then follows from Theorem \ref{well}.

Applying Weyl's law \eqref{weyl}, we deduce from \eqref{con1} in Theorem \ref{well-un} and \eqref{spe-w-err} with $\alpha=0$ that
\begin{align*}
\ee\left[\|u-u_N^h\|^p\right]
& \le C \ee\left[\|E_N\|^p\right] \\
& = C\left( \sum_{k=N+1}^\infty \lambda_k^{\rho-2} \right)^\frac p2
\asymp N^{(\frac {\rho-2}d+\frac 12)p}.
\end{align*}
Analogously, by \eqref{unpun} in Theorem \ref{ununh} and \eqref{spe-w-reg} with $\alpha=0$, 
\begin{align*}
\ee\left[\|u_N-u_N^h\|^p\right]
& \leq C h^{2p} \ee\left[\|u_N\|_2^p\right] \\
& \leq C h^{2p} \left( \sum_{k=1}^N \lambda_k^\rho \right)^\frac p2
\asymp h^{2p} \left( \sum_{k=1}^N k^\frac{2\rho}{d} \right)^\frac p2.
\end{align*}
 \eqref{pow-1} then follows immediately from the elementary inequality
\begin{align*}
\sum_{k=1}^N k^\delta\asymp N^{\delta+1},\quad \delta>-1.
\end{align*}
The estimation \eqref{pow-2} follows from a similar argument and 
\eqref{spe-w-reg} with $\alpha=r-1$.

\begin{rk}\label{main}
Let $h=N^{-\frac 1d}$. Then we have the optimal error estimate
\begin{align*}
\left(\ee\left[\|u-u_N^h\|^p\right]\right)^\frac1p
\le Ch^{2-\frac{d}{2}-\rho}.
\end{align*}
In particular for white noise driven SPDE \eqref{ell}, i.e., $\rho=0$, we have
\begin{align}\label{eq:1d} 
\left(\ee\left[\|u-u_N^h\|^p\right]\right)^\frac1p
\le Ch^{2-\frac{d}{2}}. 
\end{align}
We can see from the above estimation that for $d=2$, the convergence of our finite element approximation for the white noise driven SPDE \eqref{ell} is first order. This is in comparison with the error estimate 
\begin{align*}
\left(\ee\left[\|u-u_h\|^2\right]\right)^\frac12
\le Ch^{1-\epsilon},
\end{align*}
for the finite element approximation of the same problem with noise term approximated by piecewise constants in space (see e.g. \cite{CYY07, GM06}). Here $\epsilon$ is a small and positive number. Thus our error estimate is an improvement. 
Moreover, for $d=1$, the convergence order of our method is $1.5$,
which improves the first order convergence results in \cite{ANZ98, DZ02, GM06}. 
\begin{rk}
When $\OO=\Pi_{i=1}^d[a_i,b_i]$, our result for the white noise case is comparable to the one in \cite{Zhang2016}. 
\end{rk}

\end{rk}

\section*{Acknowledgements}

We thank the anonymous referee for very helpful remarks and suggestions.
The first author is partially supported by U.S. National Science Foundation, No. DMS1620150, and U.S. Army ARDEC, No. W911SR-14-2-0001. 
The second author is partially supported by National Natural Science Foundation of China, No. 91130003, No. 11021101, and No. 11290142.
The last is partially supported by Hong Kong RGC General Research Fund, No. 16307319, and the UGC - Research Infrastructure Grant, No. IRS20SC39. 

\bibliographystyle{plain}
\bibliography{bib}

\end{document}